\newtheorem{thm}{Theorem}[section]
\newtheorem{lem}[thm]{Lemma}
\newtheorem{prop}[thm]{Proposition}
\newtheorem{theorem}{Theorem}
\theoremstyle{definition}
\newtheorem{example}[thm]{Example}
\newtheorem*{ack}{Acknowledgements}
\numberwithin{equation}{section}
\DeclareMathOperator{\Susp}{Susp}
\DeclareMathOperator{\curv}{curv}
\newcommand{\RP}{\mathbb{R}P}
\newcommand{\CP}{\mathbb{C}P}
\newcommand{\sphere}{\mathrm{\mathbb{S}}}
\newcommand{\SO}{\mathrm{SO}}
\newcommand{\Ss}{\mathrm{S}}
\newcommand{\T}{\mathrm{T}}
\newcommand{\ZZ}{\mathbb{Z}}
\newcommand{\bq}{/ \hspace{-.1cm} /}
\begin{document}



\title[Alexandrov $4$-manifolds with torus actions]{Simply connected Alexandrov $4$-manifolds with positive or nonnegative curvature and torus actions\\  
}

\author[F.\ Galaz-Garcia]{Fernando Galaz-Garcia$^\ast$}

\thanks{$^\ast$ The author is part of SFB 878: \emph{Groups, Geometry \& Actions}, at the University of M\"unster.}

\address{Mathematisches Institut, WWU M\"unster, Germany}
\email{f.galaz-garcia@uni-muenster.de}

\date{\today}


\subjclass[2000]{53C20, 57S15, 57S25}
\keywords{$4$-manifold, circle action, torus action, Alexandrov space}


\begin{abstract}
We point out that a $4$-dimensional topological manifold with an Alexandrov metric (of curvature bounded below) and with an effective, isometric action of the circle or the $2$-torus is locally smooth. This observation implies that the topological and equivariant classifications of compact, simply connected Riemannian $4$-manifolds with positive or nonnegative sectional curvature and an effective isometric action of a circle or a $2$-torus  also hold if we consider  Alexandrov manifolds instead of Riemannian manifolds. 
\end{abstract}
\maketitle




\section{Introduction and results}

The interaction between  (sectional) curvature and  isometric actions of compact Lie groups on Riemannian manifolds has played an important role in the study of compact Riemannian manifolds with positive or nonnegative curvature.  Alexandrov spaces (with curvature bounded below) arise naturally in this context as orbit spaces of isometric actions of compact Lie groups on Riemannian manifolds with a lower curvature bound. Alexandrov spaces also occur as Gromov-Hausdorff limits of sequences of Riemannian manifolds. Furthermore,  being synthetic generalizations of Riemannian manifolds with curvature bounded below, Alexandrov spaces are geometric objects of intrinsic interest.  

It is well-known that the isometry group of a (compact) Riemannian manifold is a (compact) Lie group (see, for example, \cite{Ko2}). Since the same conclusion is true for Alexandrov spaces (cf.~\cite{FY}), it is of interest  to determine which results in equivariant Riemannian geometry have Alexandrov analogues (cf.~\cite{GGG,GGS_C1A,HaSe}). In this note we show that the topological and equivariant classifications of compact, simply connected Riemannian $4$-manifolds with positive or nonnegative curvature and an effective isometric action of a circle or a $2$-torus (cf.~\cite{GG,GGK, GGS, GW, HK, Kl, SY}) still holds if instead of considering  Riemannian manifolds we consider \emph{Alexandrov manifolds}, i.e.~ topological manifolds equipped with an Alexandrov metric. 

Before stating our results, recall that a \emph{biquotient} is a quotient of a Lie group $G$ by the two-sided, free action of a subgroup $H \subset G \times G$.  If $G$ is equipped with a bi-invariant metric, then the action of $H$ is by isometries and the quotient $G \bq H$ equipped with the induced metric (of nonnegative curvature) is called a \emph{normal biquotient}. In the statements of the theorems we assume, without loss of generality, that the positively curved Alexandrov spaces have curvature bounded below by $1$. 


\begin{theorem}
\label{T:T2_CLAS}
Let $X$ be a compact, simply connected $4$-dimensional Alexandrov manifold with an effective, isometric $\T^2$ action. 
	\begin{enumerate}
		\item If $\curv(X)\geq 1$, then $X$ is equivariantly homeomorphic to $\sphere^4$ or $\CP^2$ with a linear action. \\
		
		\item If $\curv(X)\geq 0$, then $X$ is equivariantly homeomorphic to $\sphere^2$ or $\CP^2$ with a linear action or to $\sphere^2\times\sphere^2$ or $\CP^2\#\pm\CP^2$ with an action induced by a normal biquotient Riemannian metric. 
	\end{enumerate}
\end{theorem}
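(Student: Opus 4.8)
The plan is to use the local smoothness of the action to pass into the classical theory of locally smooth torus actions on $4$-manifolds, and then to read off the two curvature restrictions from the intrinsic geometry of the orbit space. Throughout, write $X^\ast = X/\T^2$ for the orbit space. Since $\T^2$ acts isometrically on the Alexandrov space $X$ and acts with cohomogeneity two, the quotient $X^\ast$ is itself a $2$-dimensional Alexandrov space, and it inherits the lower curvature bound of $X$: thus $\curv(X^\ast)\geq 1$ in case (1) and $\curv(X^\ast)\geq 0$ in case (2).

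First I would invoke the main observation of the paper, namely that the effective isometric $\T^2$ action on $X$ is locally smooth. This places us precisely in the setting of the Orlik--Raymond classification of effective, locally smooth $\T^2$ actions on closed, simply connected $4$-manifolds. In that setting $X^\ast$ is a $2$-disk whose boundary circle is subdivided into a cyclic sequence of isotropy arcs, each carrying a primitive weight in $\ZZ^2$ recording the circle isotropy along that arc, separated by vertices, the latter being the images of the isolated $\T^2$-fixed points. The weighted orbit space determines $X$ up to equivariant homeomorphism, and the resulting total spaces are connected sums of copies of $\sphere^4$, $\pm\CP^2$ and $\sphere^2\times\sphere^2$. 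Since $\chi(X)$ equals the number of $\T^2$-fixed points for a torus action, the number $n$ of vertices satisfies $n=\chi(X)=2+b_2(X)$.

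Next I would extract the curvature restriction on $n$ directly from $X^\ast$. At each $\T^2$-fixed point the isotropy representation is the standard rotation representation on $\RR^2\oplus\RR^2$, so, by local smoothness, near the corresponding vertex the orbit space is a flat cone of angle $\pi/2$. Doubling $X^\ast$ along its boundary produces, by Perelman's doubling theorem, a closed $2$-dimensional Alexandrov space $D X^\ast \cong \sphere^2$ with the same lower curvature bound, in which each vertex becomes a cone point of angle $\pi$, hence an atom of curvature mass $2\pi-\pi=\pi$. The Gauss--Bonnet formula for Alexandrov surfaces gives total curvature $2\pi\,\chi(\sphere^2)=4\pi$; the only atoms come from the $n$ vertices (the principal part of $X^\ast$ is a smooth surface and the isotropy arcs carry no atoms), and the remaining curvature is nonnegative, and strictly positive when $\curv\geq 1$ since then the area term contributes. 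Hence $n\pi\leq 4\pi$, with strict inequality in case (1), so $n\leq 3$ in case (1) and $n\leq 4$ in case (2), while simple connectivity forces $n=\chi(X)\geq 2$. Equivalently $b_2(X)\leq 1$ in case (1) and $b_2(X)\leq 2$ in case (2), which truncates the Orlik--Raymond connected sums to $\sphere^4,\CP^2$ and to $\sphere^4,\CP^2,\sphere^2\times\sphere^2,\CP^2\#\pm\CP^2$ respectively.

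Finally I would match the surviving orbit data to the stated models: in case (1) the admissible weighted orbit spaces are those of the linear $\T^2$ actions on the round $\sphere^4$ and on $\CP^2$ with the Fubini--Study metric, while in case (2) the additional $n=4$ data realize $\sphere^2\times\sphere^2$ and $\CP^2\#\pm\CP^2$ with the $\T^2$ actions carried by the normal biquotient metrics, as in the cited Riemannian classification. Identifying each realizable action with its linear or biquotient model via the equivariant Orlik--Raymond data completes the equivariant homeomorphism classification. I expect the sole genuine obstacle to be the reduction itself, that is, establishing local smoothness; once that bridge is in place, the orbit-space combinatorics and the doubling/Gauss--Bonnet bound on $n$ are routine, and the final identification of the actions is a matter of citing the equivariant classification alongside the existing Riemannian references.
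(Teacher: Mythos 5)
Your overall skeleton is the same as the paper's: local smoothness feeds the action into the Orlik--Raymond theory, the lower curvature bound caps the number of fixed points ($\leq 3$ when $\curv\geq 1$, $\leq 4$ when $\curv\geq 0$), and the surviving cases are identified with the linear models via \cite{OR} and with the biquotient actions via \cite{GGK}. The paper obtains the fixed-point bound by citing ``a comparison argument as in \cite{GGS}'', and your doubling-plus-Gauss--Bonnet count is a legitimate implementation of such a bound. The genuine flaw is the step ``at each $\T^2$-fixed point the isotropy representation is the standard rotation representation on $\RR^2\oplus\RR^2$, so, by local smoothness, near the corresponding vertex the orbit space is a flat cone of angle $\pi/2$.'' Local smoothness is a purely \emph{topological} statement (an equivariant homeomorphism onto an orthogonal model) and carries no metric information. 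In an Alexandrov, as opposed to Riemannian, manifold the space of directions $\Sigma_xX$ at a fixed point is merely \emph{some} Alexandrov metric of $\curv\geq 1$ on the $3$-sphere, not the round one, and the vertex angle of $X^*$ is the length of the interval $\Sigma_xX/\T^2$, which need be neither that of a flat cone nor equal to $\pi/2$. For instance, the spherical suspension of a Berger metric on $\sphere^3$ rescaled to have $\curv\geq 1$ is an Alexandrov manifold homeomorphic to $\sphere^4$ with $\curv\geq 1$ and an isometric $\T^2$ action whose two fixed points have vertex angle strictly less than $\pi/2$. So that step, with that justification, is false.

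Fortunately, the inequality your argument actually needs --- vertex angle $\leq \pi/2$ --- is true and standard: isolated fixed points project to extremal points of the orbit space, i.e.\ $\mathrm{diam}\,\Sigma_{p^*}X^*\leq \pi/2$; this is precisely the fact the paper uses (explicitly in the proof of Theorem B, and implicitly through the comparison argument of \cite{GGS} in Theorem A). With the inequality in place, each vertex of the doubled disk $DX^*$ carries a curvature atom of mass $2\pi-2\theta_i\geq\pi$, and your count goes through verbatim: $n\pi\leq\omega(DX^*)=2\pi\chi(\sphere^2)=4\pi$ when $\curv\geq 0$, and strictly when $\curv\geq 1$ because the curvature measure then dominates the (positive) area measure. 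Note also that your parenthetical claim that ``the principal part of $X^*$ is a smooth surface and the isotropy arcs carry no atoms'' is likewise unjustified in the Alexandrov setting --- interior points of the edges can carry atoms, since the normal spaces of directions $S_x^\perp$ need not be round --- but it is also unnecessary: all the Gauss--Bonnet inequality requires is that the curvature measure be nonnegative away from the vertices, which $\curv\geq 0$ provides.
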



\begin{theorem}
\label{T:T1_CLAS}
Let $X$ be a compact, simply connected $4$-dimensional Alexandrov manifold  with an effective, isometric $\Ss^1$ action. 
	\begin{enumerate}
		\item If $\curv(X)\geq 1$, then $X$ is equivariantly homeomorphic to $\sphere^4$ or $\CP^2$ with a linear action. \\
		
		\item If $\curv(X)\geq 0$, then $X$ is equivariantly homeomorphic to $\sphere^2$ or $\CP^2$ with a linear action or to $\sphere^2\times\sphere^2$ or $\CP^2\#\pm\CP^2$ with an action induced by a normal biquotient Riemannian metric. 
	\end{enumerate}
\end{theorem}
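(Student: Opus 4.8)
The plan is to deduce Theorem~\ref{T:T1_CLAS} from the corresponding Riemannian classifications by first establishing the paper's main observation in the $\Ss^1$ case: an effective, isometric circle action on a compact, simply connected $4$-dimensional Alexandrov manifold $X$ is \emph{locally smooth}, i.e.\ each orbit has an invariant neighborhood equivariantly homeomorphic to a linear model. Once this is known, $X$ carries a locally linear circle action in the topological category while the lower curvature bound persists as an Alexandrov comparison hypothesis; the classification arguments of the smooth setting (cf.~\cite{HK,Kl,SY,GG,GGS,GW}) use only comparison geometry, the isometric nature of the action, and local linearity, so they transfer with purely cosmetic changes, the only difference being that the conclusion is obtained up to equivariant homeomorphism rather than diffeomorphism.

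To prove local smoothness I would work orbit by orbit through the Slice Theorem. At $p\in X$ a neighborhood is equivariantly homeomorphic to the cone over the space of directions $\Sigma_p$, a compact Alexandrov space of curvature $\ge 1$ and dimension $3$ on which the isotropy group $G_p\subset\Ss^1$ acts isometrically. Since $X$ is a topological manifold, the conical neighborhood is homeomorphic to $\RR^4$, forcing $\Sigma_p$ to be a homotopy $3$-sphere and hence, by the Poincar\'e conjecture, homeomorphic to $\sphere^3$. I would then split into orbit types: for a principal orbit $G_p$ is trivial and the model is standard; along an orbit with finite isotropy $\ZZ_k$ the space of directions suspends as $\Sigma_p=\Susp(\Sigma_p^\perp)$ with $\Sigma_p^\perp$ a positively curved Alexandrov surface, necessarily $\sphere^2$, on which $\ZZ_k$ acts by isometries and is therefore conjugate to a rotation group; and at a fixed point $G_p=\Ss^1$ acts isometrically on $\Sigma_p\cong\sphere^3$. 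In each case one must upgrade the isometric action on the space of directions to a topologically linear one, and then reassemble the slices into the required equivariant homeomorphism onto a linear neighborhood.

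The main obstacle is the fixed-point case: showing that an isometric circle action on the positively curved Alexandrov $3$-sphere $\Sigma_p$ is topologically equivalent to a linear action. I would treat this by a further descent through spaces of directions---invoking the classification of closed $3$-dimensional Alexandrov spaces and of their isometric circle actions, and reducing to the already-understood surface case---so that the whole local-smoothness statement follows by induction on dimension via iterated spaces of directions. This inductive passage through links is the technical heart of the argument.

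With local smoothness established, I would finish as in the smooth theory. The orbit space $X^\ast=X/\Ss^1$ is a $3$-dimensional Alexandrov space which, by local smoothness, underlies a topological $3$-manifold with boundary, and the fixed-point set together with the exceptional orbits and their isotropy weights furnishes the weighted orbit data underlying the Fintushel-type classification of locally smooth circle actions on simply connected $4$-manifolds. The curvature hypothesis then restricts this data through the comparison geometry of $X^\ast$: for $\curv(X)\ge 1$ the Extent Lemma and soul-type estimates force the weighted orbit space of $\sphere^4$ or $\CP^2$ with a linear action, while for $\curv(X)\ge 0$ the Soul Theorem applied to the nonnegatively curved $X^\ast$ additionally admits the data realized by $\sphere^2\times\sphere^2$ and $\CP^2\#\pm\CP^2$ with the actions induced by normal biquotient metrics. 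Matching this with the linear and normal-biquotient models yields the stated equivariant homeomorphism classification, in parallel with the $\T^2$ case of Theorem~\ref{T:T2_CLAS}.
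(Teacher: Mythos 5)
Your overall architecture---local smoothness via the Slice Theorem, then the Fintushel-type classification of locally smooth circle actions, then curvature restrictions on the orbit space---is the same as the paper's, but two of your steps have genuine gaps. The first is the fixed-point case of local smoothness, which you correctly identify as the main obstacle but then resolve by an induction that cannot close. Descending through iterated spaces of directions can only show that the isometric $\Ss^1$ action on $\Sigma_p\cong\sphere^3$ is itself locally smooth; it does not yield the \emph{global} statement that the Slice Theorem reassembly requires, namely that this action on $\sphere^3$ is equivalent to an orthogonal one. Local linearity of a circle action on the $3$-sphere does not imply linearity without a global classification theorem, and the ``classification of isometric circle actions on closed Alexandrov $3$-spaces'' you propose to invoke was not available (its proof would itself pass through the classical result you are missing). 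The paper closes exactly this point with Raymond's theorem that topological circle actions on $\sphere^3$ are equivalent to orthogonal actions \cite{Ra}, and with the Ker\'ekj\'art\'o--Brouwer--Eilenberg theorem for periodic homeomorphisms of $\sphere^2$ \cite{CK} in the $\ZZ_k$ case (your ``therefore conjugate to a rotation group'' also tacitly uses the latter). The whole point of Lemma~\ref{T:LOC_SM} is that the isotropy actions on links are topological actions on genuine spheres in dimensions where such actions are classified; no induction on the Alexandrov structure is needed, nor would it suffice.

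The second and more serious gap is in the equivariant classification. Your claim that extent and soul-type estimates ``force the weighted orbit space of $\sphere^4$ or $\CP^2$ with a linear action'' conflates the topological and equivariant statements. Point-counting (Lemma~\ref{L:FP_Bound} together with Soul, Doubling and splitting arguments) only bounds $\chi(F)=\chi(X)$ and hence pins down the homeomorphism type; it does not determine the action. Fintushel's weight data permits the closure $\overline{E}^*$ of the exceptional set to contain a \emph{knotted} simple closed curve in $X^*\simeq\sphere^3$, and Pao's nonlinear locally smooth circle actions on $\sphere^4$ \cite{Pa} realize precisely such data, so local smoothness plus the homeomorphism type of $X$ is genuinely insufficient for linearity. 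The equivariant conclusion requires showing that any simple closed curve $K^*\subset\overline{E}^*$ equals $E^*\cup F^*$ and is unknotted---equivalently, that the $\Ss^1$ action extends to a locally smooth $\T^2$ action, after which one quotes Orlik--Raymond \cite{OR} and \cite{GGK}. This is where the curvature hypothesis does its hardest work, via Grove--Wilking's double branched cover lemma and knot characterization theorem \cite{GW} combined with Lemma~\ref{L:FP_Bound} (see Proposition~\ref{P:FPH_NOCIRCLES} and the concluding arguments of Section~\ref{S:PROOF_THM_B}). Your proposal lists \cite{GW} among references that ``transfer with purely cosmetic changes'' but never engages with the knottedness obstruction, so as written the statement ``with a linear action'' does not follow.
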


Theorem~\ref{T:T2_CLAS} is a consequence of the work of Orlik and Raymond on compact, simply connected, topological $4$-manifolds with topological $2$-torus actions. Theorem~\ref{T:T1_CLAS} follows from the observation that the circle actions under consideration are locally smooth (see Subsection~\ref{SS:LOC_SM_ACTIONS}). This  allows us to use the work of Fintushel and Pao on compact, simply connected, topological $4$-manifolds with locally smooth circle actions \cite{F1,F2,Pa}. An application of metric and topological tools along the lines of the Riemannian case, including  recent work of Grove and Wilking on extremal knots in an Alexandrov $3$-sphere \cite{GW}, yields the result. 

The assumption in Theorem~\ref{T:T1_CLAS} that the circle acts by isometries of an Alexandrov metric is not superfluous, as there exist  topological circle actions on $\sphere^4$ which are not equivalent to smooth actions (cf.~\cite{MZ}). Recall that $\T^2$ is the largest torus that can act isometrically on a compact, simply connected $4$-dimensional Alexandrov space (cf.~\cite{GGS_C1A}). 
\\


To put our results into context, let us make a few remarks on Alexandrov manifolds and isometric actions on them. We will consider only manifolds without boundary, except when explicitly stated otherwise.  We recall first the fact  that in an Alexandrov manifold not every space of directions must be homeomorphic to a sphere. This phenomenon is illustrated by the following example (cf.~\cite{Ka}). 


\begin{example} 
\label{E:Alex_mfd}
Let $(P^3,g_0)$ be the Poincar\'e homology $3$-sphere with its standard Riemannian metric $g_0$ of constant sectional curvature $1$. The double spherical suspension $\Susp^2(P^3)$ is  a positively curved Alexandrov manifold and, by work of Edwards, it is homeomorphic to the $5$-sphere (see \cite{Da}). On the other hand, there are two points in $\Susp^2(P^3)$ with space of directions homeomorphic to $\Susp(P^3)$, which is not homeomorphic to the $4$-sphere. 
\end{example}

In contrast to the situation in dimension $5$,  every space of directions of an Alexandrov $n$-manifold, for $n\leq 4$, must be homeomorphic to a sphere. Indeed, in dimensions $1$, $2$ and $3$, the only possible spaces of directions are homeomorphic to $\sphere^0$ in dimension $1$,  to $\sphere^1$ in dimension $2$, and to $\sphere^2$ or $\RP^2$ in dimension $3$. In dimension $4$  any space of directions $\Sigma_xX$ of an Alexandrov $4$-manifold $X$ must  be a $3$-dimensional homology manifold and is therefore a topological $3$-manifold. Moreover, $\Sigma_xX$ must be simply connected (see, for example, \cite{Wu}) and, by Perelman's proof of the Poincar\'e conjecture, it must be homeomorphic to the $3$-sphere.

Recall that an (effective) isometric action of a compact Lie group $G$ on a Riemannian manifold $M$ with nonempty fixed point set is \emph{fixed point homogeneous} if some fixed point set component $F^*$ has codimension one in the orbit space $M^*$ of the action. In particular, $F^*$ must be a boundary component of $M^*$. By work of Grove and Searle  \cite{GS2}, any isometric fixed point homogeneous $G$-action on an $n$-sphere $\sphere^n$ equipped with a Riemannian metric of positive sectional curvature must be equivariantly diffeomorphic to an orthogonal action.  In this case, the action has exactly one fixed point set component $F^*$, and $F^*$ is diffeomorphic to a sphere. If we only assume that $\sphere^n$ carries an Alexandrov metric of positive curvature, this result is still true if $n\leq 4$, as a consequence of Theorem~A and the fact that topological actions of compact Lie groups on $\sphere^n$, for $n\leq 4$, with orbit spaces of dimension $1$ or $2$ are equivalent to orthogonal actions (see \cite{Ri} and references therein). On the other hand, for each $n\geq 5$, we can use the construction in Example~\ref{E:Alex_mfd} to create fixed point homogeneous isometric actions on $\sphere^n$ with an Alexandrov metric  of positive curvature whose fixed point set is the Poincar\'e $3$-sphere. Therefore, these actions cannot be equivalent to orthogonal actions, in contrast to the Riemannian case. 


\begin{example} Let $P^3$ be the Poincar\'e sphere with its Riemannian metric of constant sectional curvature $1$, and let  $\sphere^n$, $n\geq 1$, be the round $n$-sphere of constant sectional curvature $1$. Since the join $P^3*\sphere^1$ is homeomorphic to $\Susp^2(P^3)\simeq \sphere^5$, the spherical join $(P^3*\sphere^n,d)$ is an Alexandrov manifold homeomorphic to $\sphere^{n+4}$ and has curvature bounded below by $1$. Letting $\SO(n+1)$ act transitively on $\sphere^n$ and trivially on $P^3$, we obtain an effective isometric fixed point homogeneous $\SO(n+1)$-action on $\sphere^{n+4}$, $n\geq 1$, equipped with an Alexandrov metric of positive curvature. The fixed point set of this action is $P^3$. 
\end{example}

This note is divided as follows. In Section~\ref{S:Prelim} we  summarize some background material on isometric  actions of compact Lie groups on Alexandrov spaces (cf.~\cite{GGG,GGS_C1A,HaSe}) and show that the spaces considered in Theorems~A and B are locally smooth. Sections~\ref{S:PROOF_THM_A} and \ref{S:PROOF_THM_B} contain, respectively, the proofs of Theorems~A and B. 

We assume that the reader is acquainted with the basic theory of compact transformation groups (cf.~\cite{Br}) and of Alexandrov spaces (cf.~\cite{BBI,BGP}). The reader interested in more general aspects of group actions on topological $4$-manifolds may consult the survey \cite{Ed}.


\begin{ack} I wish to thank A.~Lytchak for constructive criticism on a preliminary version of this note. I would also like to thank  B.~Wilking, J.~Harvey and C.~Searle for conversations on their work contained in \cite{GW} and \cite{HaSe}.
\end{ack}


\section{Preliminaries}
\label{S:Prelim}


\subsection{Equivariant Alexandrov geometry}

Let $G\times X \longrightarrow X$,  $x \mapsto g(x)$, be a topological action of a topological group $G$ on a Hausdorff topological space $X$, i.e. let $X$ be a \emph{$G$-space}. We will denote the orbit of a point $x\in X$ under the action of $G$ by $G(x)\simeq G/G_x$, where $G_x = \{\, g \in G\, :\, g(x) = x \,\}$ is the isotropy subgroup of $G$ at $x$. Given a subset $A \subset X$, we will denote its projection under the orbit map $\pi: X \rightarrow X/G$ by $A^*$. Following this convention, we will denote the orbit space $X/G$  by $X^*$. We will henceforth assume all actions to be effective. We will say that two $G$-spaces are \emph{equivalent} if they are equivariantly homeomorphic.

Let $(X,d)$ be a (finite dimensional) Alexandrov space (with curvature bounded below) with an isometric action of a compact Lie group $G$. We will denote the space of directions at a point $x$ in $X$ by $\Sigma_x X$. Given $A\subset \Sigma_xX$, the set of \emph{normal directions} to $A$, denoted by $A^\perp$, is defined by
\[
A^\perp=\{\, v\in \Sigma_xX : d(v,w)=\mathrm{diam}(\Sigma_xX)/2\text{ for all } w\in A\,\}.
\]
Let $S_x\subset \Sigma_xX$ be  the unit tangent space to the orbit $G(x)$ at $x$ and suppose that  $\dim G(x)>0$. The set  $S_x^\perp$
 is a compact, totally geodesic Alexandrov subspace of $\Sigma_xX$ with curvature bounded below by $1$,  and the space of directions $\Sigma_x X$ is isometric to the join 
 $S_x * S_x^\perp$
  with the standard join metric. Moreover, either 
  $S_x^\perp$ is connected or it contains exactly two points at distance $\pi$ (cf.~\cite{GGS_C1A}).

 The slice and principal orbit theorems, as well as Kleiner's isotropy lemma for isometric actions of compact Lie groups on Riemannian  manifolds (cf.~\cite{Gr}), also hold for Alexandrov spaces (cf.~\cite{GGG,HaSe}). In particular, if a compact Lie group  $G$ acts by isometries on an Alexandrov space $X$, then a slice at $x$ is $G_x$-equivariantly homeomorphic to the cone on
 $S_x^\perp$,
  the space of normal directions to the orbit. In other words,  for any $x\in X$ there is some $r_0 > 0$ such that for all $r < r_0$ there is an equivariant homeomorphism   $\varphi : G \times_{G_x} K(S_x^\perp)\rightarrow  B_r(G(x))$
   (cf.~\cite{HaSe}). Hence $\Sigma_{x^*}X^*$, the space of directions of the orbit space $X^*$ at $x^*$, is isometric to    $S_x^\perp/G_x$.


\subsection{Locally smooth actions}
\label{SS:LOC_SM_ACTIONS}


Let $G$ be a compact Lie group and let $X$ be a $G$-space. Let $G(x)$ be an orbit of type $G/H$, for some $x\in X$. Recall that a \emph{tube} about $G(x)$ is a $G$-equivariant embedding (homeomorphism into) 
\[
\varphi: G\times_H A\longrightarrow X,
\]
onto an open neighborhood of $G(X)$ in $X$, where $A$ is some space on which $H$ acts. If $A$ is homeomorphic to Euclidean space $V$ and $H$ acts orthogonally on $V$, then we say that the tube 
\[
\varphi: G\times_H V\longrightarrow X
\]
is a \emph{linear tube}. A slice $S$ at $x\in X$ is called a \emph{linear slice} if the canonically associated tube
\[
G\times_{G_x}S\longrightarrow X
\]
\[
[g,s]\mapsto g(s)
\]
is equivalent to a linear tube, i.e.~if the $G_x$-space $S$ is equivalent to an orthogonal $G_x$-space. We say that a $G$-space is \emph{locally smooth} if there exists a linear tube about each orbit.  Since $G\times_H V$ is a $V$-bundle over $G/H$, it is a topological manifold. Thus, any  locally smooth $G$-space must be a topological manifold. If the $G$-space $X$ is locally smooth and if $x$ is a fixed point, then a neighborhood of $x$ in $X$ is equivalent to an orthogonal action. Hence, in this situation,  the fixed point set of the $G$ action is a topological submanifold of $X$. There exist examples of topological actions on manifolds which are not locally smooth (cf.~\cite[Ch. IV]{Br}).  We now show that the $G$-spaces considered in Theorems~A and B  are locally smooth.



\begin{lem}
\label{T:LOC_SM}
A $4$-dimensional Alexandrov manifold with an effective, isometric action of $\T^2$ or $\Ss^1$ is locally smooth.  
\end{lem}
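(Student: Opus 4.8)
The plan is to show that near every orbit the action looks orthogonal, i.e.\ that each slice representation is a linear $G_x$-space. By the slice theorem recalled in Section~\ref{S:Prelim}, a slice at $x$ is $G_x$-equivariantly homeomorphic to the cone $K(S_x^\perp)$ on the space of normal directions to the orbit. Thus local smoothness at $x$ reduces to showing that the $G_x$-action on $K(S_x^\perp)$ is equivalent to an orthogonal one. Since $G$ is abelian ($\T^2$ or $\Ss^1$) the isotropy groups $G_x$ are themselves closed abelian subgroups, hence of the form $\T^k$, $\T^k\times(\text{finite cyclic})$, or finite cyclic, which keeps the bookkeeping very manageable. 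The strategy is to stratify by the dimension $k=\dim G(x)$ of the orbit and verify linearity of the slice in each case.

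First I would dispose of the principal and fixed strata. At a principal orbit the slice is a point (the codimension equals $\dim X-\dim G$, and $S_x^\perp$ has the appropriate dimension), so the tube is trivially linear. At a fixed point $x$ the orbit is a point, $S_x=\emptyset$, and $\Sigma_xX=S_x^\perp$ is itself a space of directions of an Alexandrov $4$-manifold; by the discussion preceding the lemma this $\Sigma_xX$ is homeomorphic to $\sphere^3$, so the slice is $K(\sphere^3)\cong\RR^4$ and $G_x=G$ acts on it. The point is then that a topological action of $\T^2$ or $\Ss^1$ on $\RR^4$ (equivalently, on $\sphere^3=\Sigma_xX$) is known to be equivalent to a linear one; here I would invoke the low-dimensional results on topological torus and circle actions on spheres (the references to \cite{Br} and \cite{Ri} cited earlier) to conclude linearity.

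The substantive cases are the one- and two-dimensional singular strata. When $\dim G(x)=1$, the unit tangent space $S_x$ to the orbit is $\sphere^0$, and by the join splitting $\Sigma_xX\cong S_x*S_x^\perp$ recalled in Section~\ref{S:Prelim} the normal space $S_x^\perp$ is a space of directions whose dimension is $2$; being a positively curved Alexandrov surface it is $\sphere^2$ or $\RP^2$, so $K(S_x^\perp)$ is a cone on $\sphere^2$ or $\RP^2$ and $G_x$ (a closed subgroup of $\T^2$ acting with a fixed point, hence a circle times a finite group, or finite) acts on it. When $\dim G(x)=2$ (possible only for the $\T^2$-action), $S_x=\sphere^1$, $S_x^\perp$ is $1$-dimensional, hence $\sphere^0$ or $\sphere^1$, the slice $K(S_x^\perp)$ is $1$- or $2$-dimensional, and $G_x$ is finite. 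In each instance the relevant object is a topological action of a compact abelian Lie group on a cone over $\sphere^0,\sphere^1,\sphere^2$ or $\RP^2$, equivalently a low-dimensional orthogonal-candidate action; I would again appeal to the classical fact that topological actions of tori and finite abelian groups in these low dimensions are linearizable (standard in dimensions $\le 3$, via \cite{Br,Ri}) to identify the slice $G_x$-space with an orthogonal one.

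\textbf{The main obstacle} I anticipate is the fixed-point (and more generally the top-dimensional-slice) case, where one must linearize a topological $G_x$-action on a genuine $3$-manifold of directions rather than on a manifold of dimension $\le 2$. The force of the lemma really lies in knowing that $\Sigma_xX\cong\sphere^3$ and that topological $\T^2$- and $\Ss^1$-actions on $\sphere^3$ are equivalent to linear actions; this is where the abelian hypothesis and the low dimension are essential, since without them topological actions need not be linearizable (as the paper itself notes via \cite{MZ}). Everything else is routine once the join-splitting of $\Sigma_xX$ and the slice theorem from Section~\ref{S:Prelim} are in hand, so I would organize the proof as a short case analysis on $\dim G(x)$, reducing each case to the linearization of a topological abelian action in dimension $\le 3$.
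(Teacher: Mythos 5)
Your overall strategy---reduce via the slice theorem to linearizing the isotropy action on the slice $K(S_x^\perp)$, then treat each orbit dimension by quoting linearization results for topological actions in low dimensions---is the same as the paper's. But there is a genuine gap in the execution: local smoothness requires each slice to be a cone over a \emph{sphere} on which $G_x$ acts orthogonally, and you never establish that $S_x^\perp$ is a sphere. In the case $\dim G(x)=1$ you explicitly allow $S_x^\perp\cong\RP^2$ and then propose to ``linearize'' the $G_x$-action on $K(\RP^2)$. This step cannot work: $K(\RP^2)$ is not homeomorphic to $\RR^3$, so no action on it is equivalent to an orthogonal $G_x$-space; if such a slice occurred, the action would simply fail to be locally smooth (indeed the tube $G\times_{G_x}K(\RP^2)$ would not even be a manifold). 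What is missing is the \emph{exclusion} of $\RP^2$: since $X$ is an Alexandrov $4$-manifold, $\Sigma_xX\cong\sphere^3$ (the discussion in the introduction, which rests on the Poincar\'e conjecture), and the join splitting $\Sigma_xX\cong S_x*S_x^\perp$ then forces $S_x^\perp\cong\sphere^2$ at one-dimensional orbits (because $\Susp(\RP^2)$ is not even a manifold), $S_x^\perp\cong\sphere^1$ at two-dimensional orbits (also ruling out an interval, which your dimension count does not), and $S_x^\perp\cong\sphere^3$ at fixed points. This sphere recognition is precisely the content of the paper's assertion that the normal space of directions is homeomorphic to $\sphere^2$ or $\sphere^3$; once it is in place, the linearization results you invoke (de Ker\'ekj\'art\'o--Brouwer--Eilenberg for finite groups on $\sphere^2$, Raymond for $\Ss^1$ on $\sphere^3$, and the linearity of topological $\Ss^1$-actions on $\sphere^2$ and $\T^2$-actions on $\sphere^3$) finish the proof exactly as in the paper.

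A second, related error: the claim that ``at a principal orbit the slice is a point'' is false. The actions have cohomogeneity $2$ (for $\T^2$) or $3$ (for $\Ss^1$), so principal slices are $2$- or $3$-dimensional. Even though the principal isotropy is trivial, linearity of the tube still requires the slice itself to be Euclidean, i.e.\ $S_x^\perp$ to be a sphere; in particular, at principal $\Ss^1$-orbits the same $\RP^2$ exclusion is needed. So the principal stratum is not ``trivially linear'' in your setup---it needs the same sphere-recognition argument as the singular strata.
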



\begin{proof}  
By the slice theorem, it suffices to show that the isotropy action on the normal space of directions to any orbit is equivalent to a linear action on a sphere.
 
Suppose first that we have an action of $\T^2$. In this case, the only possible non-principal isotropy subgroups of the torus action are conjugate to $\Ss^1$ or $\T^2$ (cf.~\cite{OR}). The space of directions normal  to the orbit is, respectively, homeomorphic to $\sphere^2$ or to $\sphere^3$ and it is well-known that topological $\Ss^1$ actions on $\sphere^2$ and topological $\T^2$ actions on $\sphere^3$ are equivalent to linear actions.

Suppose now that we have an $\Ss^1$ action. In this case, the only possible non-principal isotropy subgroups are finite cyclic groups $\ZZ_k$, for $k\geq 2$, or $\Ss^1$. The space of directions normal  to the orbit is, respectively, homeomorphic to $\sphere^2$ or to $\sphere^3$. In the first case, $\ZZ_k$ acts by homeomorphisms on $\sphere^2$. By work of de Ker\'ekj\'art\'o \cite{dK}, Brouwer \cite{Bro} and Eilenberg \cite{Ei},   this action is equivalent to an orthogonal action (cf.~\cite{CK}). In the second case,  $\Ss^1$ acts by homeomorphisms on $\sphere^3$ and, by work of Raymond \cite[Theorem~6]{Ra}, this action is equivalent to an orthogonal action.
\end{proof}


\section{Proof of Theorem~A}
 \label{S:PROOF_THM_A}
By work of Orlik and Raymond \cite{OR} , $X$ is homeomorphic to a connected sum of copies of $\sphere^2\times \sphere^2$, $\pm \CP^2$ and $\sphere^4$. Moreover,  the $\T^2$ action is equivalent to a smooth $\T^2$ action on $X$, where $X$ has the standard smooth structure induced via connected sum by the standard smooth structure on each one of $\sphere^2\times \sphere^2$, $\CP^2$ and $\sphere^4$. The orbit space $X^*$ is homeomorphic to a $2$-disk and the action has at least $2$ fixed points. On the other hand, by a comparison argument as in \cite{GGS}, we conclude that the action has at most $3$ fixed points, if $\curv(X)\geq 1$, and at most $4$ fixed points, if $\curv(X)\geq 0$. It follows from \cite{OR} that, if the action has $2$ or $3$ fixed points, then $X$ is equivariantly homeomorphic, respectively,  to $\sphere^4$ or $\CP^2$ equipped with a linear action; if the action has $4$ fixed points, then $X$ is equivariantly homeomorphic to $\sphere^2\times \sphere^2$ or $\CP^2\#\pm \CP^2$ with a smooth $\T^2$ action. By \cite{GGK}, any smooth $\T^2$ action on $\sphere^2\times \sphere^2$ or $\CP^2\#\pm \CP^2$ is equivariantly diffeomorphic to an isometric action on a normal biquotient.


\section{Proof of Theorem~B}
 \label{S:PROOF_THM_B}

\subsection{Initial setup}
\label{SS:SETUP} 
Let $X$ be a compact, simply connected $4$-dimensional Alexandrov manifold  with an isometric $\Ss^1$ action. 
By Lemma~\ref{T:LOC_SM}, the $\Ss^1$ action on $X$ is locally smooth. Fintushel analyzed in \cite{F1} locally smooth circle actions on compact, simply connected topological $4$-manifolds. In the next paragraph we recall those results in \cite{F1} that will be relevant to our discussion. 


The only possible orbit types of the action are principal orbits, fixed points and exceptional orbits. Their corresponding isotropy groups are, respectively, the trivial group, the group $\Ss^1$, and a finite cyclic group $\ZZ_k$,  $k\geq 2$. We will denote the set of fixed points by $F$ and the set of exceptional orbits by $E$. The orbit space $X^*$ is a simply connected topological $3$-manifold with boundary $\partial X^*\subset F^*$, the set $F^*-\partial X^*$ of isolated fixed points is finite and $F^*$ is nonempty. The boundary components of $X^*$ are $2$-spheres and the closure $\overline{E}^*$ of $E^*$ is a collection of polyhedral arcs and simple closed curves in $X^*$. The components of $E^*$ are open arcs on which orbit types are constant, and these arcs have closures with distinct endpoints in $F^*-\partial X^*$, so that $\overline{E}^*\subset E^*\cup F^*$. It follows from \cite[Theorem 7.1]{F1} and the validity of the Poincar\'e conjecture that, if   $\overline{E}^*$ contains no simple closed curves, then the 
$\Ss^1$ action on $X$ extends to 
a locally smooth action of $\T^2$. If $\overline{E}^*$ contains a simple closed curve $K^*$, then the $\Ss^1$ action extends to a locally smooth $\T^2$ action if and only if $E^*\cup F^*=K^*$ and $K^*$ is unknotted in $X^*\simeq \sphere^3$. The simple closed curves in $\overline{E}^*$ are tame knots in $X^*$.


The set $E^*\cup F^*\subset X^*$ of non principal orbits is an \emph{extremal set} in $X^*$, i.e.~given any point $p^*\in X^*- E^*\cup F^*$, any point $q^*\in  E^*\cup F^*$ with $d(p^*,q^*) = d(p^*,E^*\cup F^*)$ is a critical point of the distance function $d(p^*, \cdot)$; equivalently,  the space of directions $\Sigma_{q^*}X^*$ has diameter at most $\pi/2$. In particular, the space of directions at an isolated fixed point $p^*$ in $X^*$ is isometric to a $2$-sphere with an Alexandrov metric of positive curvature and is not larger than $\sphere^2(1/2)$, the round $2$-sphere with radius $1/2$, in the sense that there exists a distance decreasing map to the smaller space. Recall that that an extremal set that contains no proper extremal set of the same dimension is said to be \emph{primitive}. The $2$-dimensional components of $F^*$, which correspond to the components of $\partial X^*$, are primitive extremal sets in $X^*$. 


By work of Fintushel \cite{F1,F2}, Pao \cite{Pa} and Perelman's proof of the Poincar\'e conjecture, $X$ is equivariantly homeomorphic to a connected sum of copies of $\sphere^2\times \sphere^2$, $\pm \CP^2$ and $\sphere^4$ (\cite[Theorem 13.2]{F2}). The action is determined up to (orientation preserving) equivariant homeomorphism by Fintushel's  orbit space weights (see \cite[Section~3]{F1}). Moreover,  the $\Ss^1$ action is equivalent to a smooth $\Ss^1$ action on $X$ equipped with the standard smooth structure induced via connected sum by the standard smooth structure on each one of $\sphere^2\times \sphere^2$, $\CP^2$ and $\sphere^4$. Therefore, by a theorem of Kobayashi for smooth circle actions on compact smooth manifolds \cite[Theorem~5.5-(1)]{Ko2}, the Euler characteristic of $X$, which we denote by $\chi(X)$, is equal to $\chi(F)$, the Euler characteristic of the fixed point set of the circle action. Observe that $\chi(X)\geq 2$, as a consequence of Poincar\'e duality. Thus, to obtain the topological classification in Theorem~B it suffices to show that $\chi(F)=\chi(X)\leq 3$, when $X$ has positive curvature, and $\chi(F)=\chi(X)\leq 4$, when $X$ has nonnegative curvature.

We will use the following lemma, which follows from triangle comparison arguments (see, for example, \cite{GW}). 


\begin{lem}
\label{L:FP_Bound}
Let $Y$ be a $3$-dimensional  Alexandrov space. 
\begin{itemize}
	\item[(1)]If $\curv (Y)\geq 1$, then $Y$ has at most three points for which the space of directions is not larger than $\sphere^2(1/2)$.\smallskip
	\item[(2)]If $\curv (Y)\geq 0$, then $Y$ has at most four points for which the space of directions is not larger than $\sphere^2(1/2)$.\smallskip
\end{itemize}
\end{lem}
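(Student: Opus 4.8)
The plan is to translate the hypothesis into an angle condition at each of the marked points, feed that into Toponogov's comparison theorem, and then bound the number of marked points by an extent/packing estimate against the optimal configuration in the model surface $\sphere^2_\kappa$. Write $x_1,\dots,x_q$ for the points whose space of directions is not larger than $\sphere^2(1/2)$. Since $\sphere^2(1/2)$ has diameter $\pi/2$, the hypothesis forces $\mathrm{diam}(\Sigma_{x_i}Y)\le \pi/2$, so any two minimal geodesics issuing from $x_i$ meet at angle at most $\pi/2$; in particular $\angle x_j x_i x_k\le \pi/2$ for every pair $x_j,x_k$. Because $\curv(Y)\ge \kappa$, Toponogov's theorem gives, at every marked vertex and for every triple, the comparison-angle bound $\tilde\angle_\kappa\, x_j x_i x_k\le \angle x_j x_i x_k\le \pi/2$. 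Spelling this out in the model space, it reads $d(x_j,x_k)^2\le d(x_i,x_j)^2+d(x_i,x_k)^2$ when $\kappa=0$, and $\cos d(x_j,x_k)\ge \cos d(x_i,x_j)\cos d(x_i,x_k)$ when $\kappa=1$ (here $\mathrm{diam}(Y)\le\pi$ by Bonnet--Myers).

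Next I would run the count through the $q$-extent $\xt_q(Z)=\binom{q}{2}^{-1}\max\sum_{i<j}d(z_i,z_j)$. Assuming for contradiction that $q=4$ (for $\kappa=1$) or $q=5$ (for $\kappa=0$), I would use the inequalities above to constrain the mutual distances and compare $\xt_q(\{x_1,\dots,x_q\})$ with the extent of the optimal $q$-point configuration in $\sphere^2(1)$, respectively $\RR^2$; the configurations realizing equality are the regular ones, and the goal is to show that $q$ points obeying the angle constraints cannot coexist past the stated threshold. This is exactly the kind of triangle-comparison/extent estimate carried out in \cite{GW}, which I would invoke for the sharp count, and which is what separates the value three ($\kappa=1$) from four ($\kappa=0$).

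The main obstacle is that the plain diameter bound $\mathrm{diam}(\Sigma_{x_i}Y)\le \pi/2$ is by itself too weak to produce the sharp constants, so the hypothesis must be used in its full strength. Indeed, the flat orbifold $\torus^3/\ZZ_2$, with $\ZZ_2$ acting by $x\mapsto -x$, is a nonnegatively curved $3$-dimensional Alexandrov space with eight singular points, each having space of directions $\RP^2$ of diameter $\pi/2$; their squared mutual distances equal $h_{ij}/4$, where $h_{ij}$ is the Hamming distance between the corresponding vectors in $\{0,1\}^3$, so the comparison-angle condition $h_{jk}\le h_{ij}+h_{ik}$ is just the triangle inequality for $h$ and holds for \emph{all} eight points. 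Thus the diameter-only hypothesis would (wrongly) permit eight marked points. The resolution is that "not larger than $\sphere^2(1/2)$" requires a distance non-increasing map \emph{onto} $\sphere^2(1/2)$, which excludes the $\RP^2$ directions above since $\RP^2$ is strictly larger than $\sphere^2(1/2)$. Exploiting this stronger, cone-level comparison to rule out such configurations while keeping the count sharp is the crux, and it is precisely where the comparison geometry of \cite{GW} does the essential work.
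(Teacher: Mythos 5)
Your third paragraph contains the best insight in the proposal --- the observation, via $\torus^3/\ZZ_2$, that the diameter-only reduction cannot give the sharp constants --- but it also concedes that the proposal is not a proof: the argument you actually set up in the first two paragraphs (angles $\le\pi/2$ from the diameter bound, Toponogov, then an extent comparison for the configuration $\{x_1,\dots,x_q\}$ against model configurations) is exactly the argument you then show cannot work, and it fails for part (1) as well: the double of a spherical orthant (the simplex in $\sphere^3$ cut out by the coordinate hyperplanes) has $\curv\ge 1$ and \emph{four} points whose spaces of directions (doubled right-angled spherical triangles) have diameter $\pi/2$. What replaces the discarded argument is never stated; ``use the hypothesis in full strength as in \cite{GW}'' is a plan, not a step. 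For the record, the comparison argument that does work applies extents to the \emph{spaces of directions}, not to the configuration of marked points: the relation ``not larger than'' is monotone under $q$-extent, so $\xt_q(\Sigma_{x_i}Y)\le\xt_q(\sphere^2(1/2))$, while the extent lemma of Grove and Markvorsen gives, for any $q+1$ points in a space with $\curv\ge 0$ (resp. $\ge 1$), that the average of $\xt_q(\Sigma_{x_i}Y)$ is at least $\pi/3$ (resp. strictly greater than $\pi/3$). Since $\xt_3(\sphere^2(1/2))=\pi/3$, four marked points are impossible in curvature $\ge 1$. But $\xt_4(\sphere^2(1/2))$ is \emph{also} exactly $\pi/3$ (maximized by four points on a great circle, not by the regular tetrahedron --- another place where your ``regular configurations'' intuition misleads), so five points in nonnegative curvature yield only equality, and the residual rigidity analysis is precisely the nontrivial content hiding behind the citation. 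Your sketch never reaches this structure.

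There is also a concrete error: you invert the definition. ``$\Sigma$ is not larger than $\sphere^2(1/2)$'' means there is a distance-nonincreasing map \emph{from} $\sphere^2(1/2)$ \emph{onto} $\Sigma$ (the paper's ``distance decreasing map to the smaller space''), not a map from $\Sigma$ onto $\sphere^2(1/2)$. Under your reading the lemma is false outright: every space of directions of the round $\sphere^3$ is $\sphere^2(1)$, which surjects onto $\sphere^2(1/2)$ by scaling, so every point of $\sphere^3$ would count as marked. Likewise, what excludes your $\RP^2$ directions is area monotonicity for the correctly oriented map ($\sphere^2(1/2)$ has area $\pi$ and cannot surject $1$-Lipschitzly onto $\RP^2(1)$, of area $2\pi$), not ``$\RP^2$ is strictly larger'' in your reversed sense. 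In fairness, the paper itself offers no proof of this lemma --- it, too, points to triangle comparison ``as in \cite{GW}'' --- so resting the crux on \cite{GW} is not by itself the defect; the defect is that everything the proposal adds to that citation is either insufficient (and acknowledged as such) or stated backwards, so it does not amount to a proof.
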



\subsection{Topological classification}


We first consider the case where $\curv(X)\geq 1$.
Suppose that the fixed point set $F$ contains a $2$-sphere, which must be a boundary component of the orbit space $X^*$. Since $X^*$ is a positively curved Alexandrov space, the distance function to $\partial X^*$ is strictly concave and it follows from Perelman's Soul Theorem for Alexandrov spaces \cite{Pe} that the set at maximal distance from $\partial X^*$ is a point $x_0^*$.  The distance function to $\partial X^*$  has no critical points in 
$X^*- \{\, \partial X^*\cup \{\,x_0^*\,\}\,\}$  and, proceeding as in \cite[Lemma~1.1-(iii)]{GS}, one concludes that the points in $X^*- \{\, \partial X^*\cup \{\,x_0^*\,\}\,\}$ correspond to orbits with trivial isotropy.  Hence, the fixed point set of the action is either a $2$-sphere, or a $2$-sphere and an isolated fixed point. If $F$ consists only of  isolated fixed points, then  there are at most three such points,  by Lemma~\ref{L:FP_Bound}. In all cases, we conclude that $\chi(F)\leq 3$. 
  

Suppose now that $\curv(X)\geq 0$. We have three cases to consider: first, when the fixed point set $F$ contains no $2$-dimensional component; second, when $F$ contains exactly one $2$-dimensional component; third, when $F$ contains two or more $2$-dimensional components. 

In the first case, $F$ consists only of isolated fixed points and, by Lemma~\ref{L:FP_Bound}, there can be at most four such points. 

In the second case, we must show that there are at most two isolated fixed points.  Observe that the orbit space $X^*$ is a $3$-ball with an Alexandrov metric of nonnegative curvature. By Perelman's Doubling Theorem \cite{Pe}, the double $DX^*$ of $X^*$ is a $3$-sphere with an Alexandrov metric of nonnegative curvature. By Lemma~\ref{L:FP_Bound}, $DX^*$ contains at most four points  for which the space of directions is not larger than $\sphere^2(1/2)$. Therefore, $X^*$ can contain at most two isolated fixed points. 

In the third case, there are at least two $2$-dimensional fixed point set components, $F_1^*$ and $F^*_2$, and they are boundary components of $X^*$. Let $C^*$ be the set at maximal distance from $F^*_1$. Since $X^*$ is  nonnegatively curved, the distance function to $F^*_1$ has no critical points in $X^*-\{F^*_1\cup C^*\}$. Therefore, $F^*_2\subset C^*$ and we conclude that $X^*$ has exactly two boundary components and there cannot be isolated fixed points. Alternatively, since each $2$-dimensional fixed point set component is a primitive extremal subset of $X^*$, it follows from W\"orner's Splitting Theorem \cite{Wo} that $X^*$ must split isometrically as $F_1^*\times I$, where $I$ is a closed interval. 

We conclude that in the nonnegatively curved case  $F$ consists of four isolated fixed points,  a $2$-sphere and at most two isolated fixed points, or two $2$-spheres. Hence, $\chi(F)\leq 4$.


\subsection{Equivariant classification}


Let $X$ be isometric to one of $\sphere^4$, $\CP^2$, $\sphere^2\times\sphere^2$ or $\CP^2\#\pm\CP^2$ equipped with an Alexandrov metric of nonnegative curvature and an isometric circle action.  By work of Orlik and Raymond \cite{OR}, any locally smooth smooth $\T^2$ action on $\sphere^4$, $\CP^2$ or  
$\sphere^2\times\sphere^2$ or $\CP^2\#\pm\CP^2$  is equivalent to a smooth action and any smooth $\T^2$ action on $\sphere^4$ or $\CP^2$  is equivalent to a linear action. On the other hand, by \cite{GGK}, any smooth $\T^2$ action on $\sphere^2\times\sphere^2$ or $\CP^2\#\pm\CP^2$ is equivalent to an isometric action on a normal biquotient. Thus, to obtain the equivariant classification of locally smooth circle actions on $X$ it suffices to show that the circle action extends to a locally smooth $\T^2$ action. By the discussion in Subsection~\ref{SS:SETUP}, it is enough to verify that if $K^*\subset \overline{E}^*$ is a simple closed curve in $X^*$, then $K^*=E^*\cup F^*$ and $K^*$ is unknotted in $X^*\simeq \sphere^3$. To do so, we apply the techniques developed by Grove and Wilking in \cite{GW}. The following result is pivotal.


\begin{lem}[Lemma~5.2, \cite{GW}]
\label{L:DBL_BC}  The double branched covering of $\sphere^3$ with an Alexandrov metric of nonnegative curvature and branching locus an extremal simple closed curve is an Alexandrov space of nonnegative curvature. 
\end{lem}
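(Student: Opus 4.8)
The plan is to exhibit the double branched cover as a local Alexandrov space and invoke a gluing or local-to-global principle, so that the only curvature condition that needs checking is at points lying over the branching curve $K^*$. Away from $K^*$, the double branched cover is locally isometric to the base $\sphere^3$ (the covering is an isometry on each sheet over $\sphere^3 - K^*$), so the lower curvature bound there is automatic. Thus the entire difficulty is concentrated at points lying over $K^*$, where the two sheets are glued along the branching locus, and one must verify that the synthetic curvature bound $\curv \geq 0$ survives the gluing.

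First I would reduce the problem to a statement about spaces of directions, using the fact that for Alexandrov spaces a lower curvature bound can be checked locally and is governed by the geometry of $\Sigma_x$ at each point. At a point $x$ lying over a point $q^* \in K^*$, the extremality hypothesis on $K^*$ gives precise control on $\Sigma_{q^*}\sphere^3$: the directions normal to $K^*$ form a space $\Sigma^\perp$ with $\curv \geq 1$, and because $K^*$ is extremal the space of directions splits (or nearly splits) as a spherical join involving the tangent direction(s) along $K^*$ and this normal space of small diameter. The double branched cover operation replaces $\Sigma^\perp$ (which is a circle, the link of a point in a $1$-dimensional extremal curve inside a $3$-space) by its own double cover as a branched object, doubling the total angle in the normal directions. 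So the second key step would be to compute $\Sigma_x$ of the double branched cover explicitly as a spherical join, and to show that doubling the circular normal factor keeps it of diameter $\leq \pi$ and curvature $\geq 1$, which by the standard join criterion is exactly what is needed for the glued space to have $\curv \geq 0$.

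The precise mechanism here is that extremality of $K^*$ forces the normal circle $\Sigma^\perp$ at each $q^*$ to have length $\leq \pi$ (equivalently the normal space of directions has diameter $\leq \pi/2$), so its double cover has length $\leq 2\pi$, i.e.\ is still an Alexandrov circle with $\curv \geq 1$ and diameter $\leq \pi$. The third step is then to promote this infinitesimal computation to the space level: I would use the characterization of $\curv \geq 0$ via the nonnegativity of the join metric together with the local nature of the comparison condition, checking the $(1+3)$-point or quadruple comparison inequalities for quadruples involving points on the two distinct sheets. The branched cover inherits geodesics from the base away from $K^*$ and has a well-defined geodesic structure through $K^*$ governed by the doubled normal geometry, and comparison follows from the comparison on $\sphere^3$ combined with the convexity coming from extremality.

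The hard part will be the step at the branching locus itself: one must be certain that the double branched cover is even an honest Alexandrov \emph{space} (a geodesic metric space in which the curvature comparison is meaningful) at points over $K^*$, and that the naive doubling of angles does not destroy the lower bound. The extremality of $K^*$ is exactly the hypothesis that makes this work, and the technical crux is translating ``$K^*$ extremal'' into the quantitative statement that the normal angle around $K^*$ is at most $\pi$, so that doubling yields at most $2\pi$ and the space of directions remains a genuine Alexandrov circle. I expect that, once the local model of $\Sigma_x$ over $K^*$ is pinned down, the global Alexandrov property follows from Perelman's gluing-type theorem for extremal subsets together with the local-to-global nature of the curvature bound, but verifying the comparison inequalities across the branch locus — where geodesics from one sheet to the other pass through $K^*$ — is where I would expect to spend the most effort.
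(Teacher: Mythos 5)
First, a point of reference: the paper does not prove this lemma at all --- it is imported verbatim from Grove and Wilking (Lemma~5.2 of \cite{GW}), where its proof is one of the main technical steps of that paper. So your proposal must stand on its own, and it has a genuine gap at its central step. You reduce the problem to showing that the space of directions at a point over the branch curve is an Alexandrov space of curvature $\geq 1$ (a suspension of a circle of length at most $2\pi$). But a lower curvature bound cannot be verified at the infinitesimal level: a length space all of whose spaces of directions are Alexandrov of curvature $\geq 1$ --- equivalently, all of whose tangent cones have curvature $\geq 0$ --- need not have any lower curvature bound (every tangent cone of a hyperbolic manifold is the flat $\RR^n$). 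What the globalization theorem permits is localization to \emph{neighborhoods}: one must verify the distance comparison for genuine quadruples of points near a branch point, including configurations whose points lie on different sheets and whose geodesics pass through or near the branch curve, where the projection to $\sphere^3$ is only $1$-Lipschitz and geodesics upstairs need not project to geodesics downstairs. Your outline defers exactly this step to tools that do not exist in the required form: Perelman's doubling theorem and Petrunin's gluing theorem concern gluing along codimension-one boundaries, not codimension-two branch loci, and the ``join criterion'' describes the tangent cone, which is only a blow-up limit and is not isometric to any neighborhood of the branch point. Supplying this missing comparison argument is precisely the content of Grove--Wilking's lemma.

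What is correct, and worth keeping, is your identification of the quantitative consequence of extremality. At each point $p^*$ of the extremal curve $K^*$, the pair of tangent directions $\Sigma_{p^*}K^*$ is an extremal subset of $\Sigma_{p^*}\sphere^3$ (tangent sets of extremal sets are extremal, by Perelman--Petrunin), and criticality of the distance functions from nearby points with unique geodesics forces the link of each tangent direction --- your normal circle --- to have length at most $\pi$; hence its fiberwise double cover still has length at most $2\pi$ and the lifted spaces of directions remain Alexandrov of curvature $\geq 1$. This shows the branched cover has no \emph{infinitesimal} obstruction to nonnegative curvature, and it is indeed the reason the lemma can be true. But the comparison inequality for points at definite distance from one another, which is what ``Alexandrov space of nonnegative curvature'' means, is asserted in your third step rather than proved, so the proposal as written does not establish the lemma.
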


 By the following proposition, we may assume that $F^*$ consists only of isolated fixed points, so that $X^*\simeq \sphere^3$.


\begin{prop} 
\label{P:FPH_NOCIRCLES}
If $F^*$ contains a $2$-dimensional component, then $E^*\cup F^*$ does not contain a simple closed curve.
\end{prop}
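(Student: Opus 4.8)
The plan is to exploit the two structural descriptions of $X^*$ obtained in Subsection~\ref{SS:SETUP} and to split according to the number of two-dimensional components of $F^*$. First I would record the relevant incidences: each two-dimensional component of $F^*$ is a boundary sphere of $X^*$ consisting entirely of fixed points (isotropy $\Ss^1$), whereas a simple closed curve $K^*\subseteq\overline{E}^*$ consists of exceptional orbits together with at most finitely many isolated fixed points of $F^*-\partial X^*$; in either case $K^*$ is an extremal simple closed curve lying in the \emph{interior} of $X^*$ and disjoint from the boundary spheres. Thus it suffices to show that, whenever $F^*$ has a two-dimensional component, the interior of $X^*$ contains no extremal simple closed curve.

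If $F^*$ has at least two such components then, exactly as in Subsection~\ref{SS:SETUP}, W\"orner's Splitting Theorem \cite{Wo} gives an isometric splitting $X^*\cong F_1^*\times I$ with $F_1^*$ a nonnegatively curved $2$-sphere and $I$ a closed interval. Since the extremal subsets of a metric product are products of extremal subsets of the factors (cf.~\cite{GW}), a one-dimensional extremal subset of $F_1^*\times I$ is either $\{p\}\times I$, which is an arc, or $A\times\{t\}$ with $A$ a one-dimensional extremal subset of $F_1^*$. The latter cannot occur because a closed nonnegatively curved surface has no one-dimensional extremal subsets (its extremal subsets are isolated points). Hence no extremal simple closed curve exists, and this case is settled.

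If $F^*$ has exactly one two-dimensional component, then $X^*$ is a nonnegatively curved $3$-ball with $\partial X^*=F_1^*$. Here I would use the concave function $f=\dist(\partial X^*,\cdot)$, whose only critical points form its maximum set, the soul $S^*$ (\cite{Pe}). By Kleiner's isotropy lemma, arguing as in \cite[Lemma~1.1-(iii)]{GS}, every orbit over a noncritical point of $f$ is principal, so all interior non principal orbits, and in particular $K^*$, lie in $S^*$. Being a deformation retract of the ball $X^*$ under the gradient flow of $f$, the convex set $S^*$ is contractible; thus if $\dim S^*\le 1$ it is a point or a geodesic segment and contains no simple closed curve, a contradiction.

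The remaining, and genuinely delicate, possibility is $\dim S^*=2$, so that $S^*$ is a disk-like nonnegatively curved surface. At any point $q^*$ where $K^*$ meets the smooth interior of $S^*$, the space of directions of $X^*$ would be the spherical suspension of the circle $\Sigma_{q^*}S^*$, and the two directions tangent to $S^*$ and perpendicular to $K^*$ would be antipodal, at distance $\pi$ from the direction to a nearby foot point; this violates the criticality required in the definition of an extremal set. Hence $K^*$ would have to lie in the intrinsic boundary $\partial S^*$. To eliminate this last configuration I would double $X^*$ along $F_1^*$: by Perelman's Doubling Theorem \cite{Pe} the double is a nonnegatively curved $3$-sphere in which $S^*$ gives two disjoint extremal disks bounded by the extremal circles coming from $K^*$, and then the double branched covering of Lemma~\ref{L:DBL_BC} together with the Grove--Wilking analysis of extremal knots in nonnegatively curved Alexandrov $3$-spheres \cite{GW} should yield a contradiction. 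Controlling this doubled configuration -- ruling out an extremal circle that bounds an extremal disk -- is where I expect the main obstacle to lie.
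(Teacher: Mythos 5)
Your reduction to the interior of $X^*$ and your first two cases are fine: two or more boundary spheres are handled by W\"orner's splitting theorem \cite{Wo} together with the product structure of extremal subsets, and a soul of dimension at most one cannot contain a simple closed curve. But the case you explicitly defer---$\dim S^*=2$---is exactly where the content of Proposition~\ref{P:FPH_NOCIRCLES} lies, and ``should yield a contradiction'' is not an argument, so the proposal is incomplete precisely at the decisive point. The paper closes this case not by localizing $K^*$ inside a soul but by a counting argument that your proposal never invokes. By the topological classification already established ($\chi(F)=\chi(X)\le 4$), if $F^*$ has exactly one $2$-dimensional component there are at most two isolated fixed points; since the arcs of $\overline{E}^*$ have distinct endpoints at isolated fixed points, a simple closed curve $K^*\subset\overline{E}^*$ must contain both of them, and at each of these points the space of directions is not larger than $\sphere^2(1/2)$. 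Doubling $X^*$ (Perelman \cite{Pe}) gives a nonnegatively curved $\sphere^3$ containing two copies of $K^*$ and hence four such points; the double branched cover with branching locus one copy of $K^*$ is again nonnegatively curved by Lemma~\ref{L:DBL_BC}, the two points on the branch locus keep spaces of directions not larger than $\sphere^2(1/2)$ (cf.~\cite[Section~5]{GW}), and the other two lift to four. This produces six such points in a nonnegatively curved Alexandrov $3$-space, contradicting Lemma~\ref{L:FP_Bound}. That use of the isolated fixed points together with Lemma~\ref{L:FP_Bound} is the missing idea.

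Separately, the step by which you force $K^*$ into $\partial S^*$ does not stand. A point $q^*$ of $K^*$ is the image of an exceptional orbit, so $\Sigma_{q^*}X^*$ is not the suspension of a round circle $\Sigma_{q^*}S^*$; it is $S_x^\perp/\ZZ_k\cong\sphere^2/\ZZ_k$, the spherical suspension of a circle of length $2\pi/k\le\pi$ whose poles are the two directions along $K^*$. In this spindle every direction lies within $\pi/2$ of any equatorial direction---in particular of the direction to a nearby foot point---so no violation of extremality occurs when $K^*$ passes through interior points of $S^*$. Hence your dichotomy, and with it the reduction to the boundary configuration you then cannot rule out, has no basis.
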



\begin{proof}
When $\chi(F^*)=2$ or $3$, the set $E^*$ is empty. Therefore, in these two cases $E^*\cup F^*=F^*$ cannot contain a simple closed curve. When $\chi(F^*)=4$, the fixed point set $F^*$ consists of two $2$-spheres, or a $2$-sphere and two isolated fixed points. In the first case, $E^*$ must be empty and $E^*\cup F^*=F^*$ does not contain a simple closed curve. Suppose now that $F^*$ is the union of a $2$-sphere and two isolated fixed points, so that $X^*$ is a $3$-ball with an Alexandrov metric of nonnegative curvature. 

Assume, for the sake of contradiction, that $K^*\subset E^*\cup F^*$ is a simple closed curve; the curve $K^*$ must contain the two isolated fixed points in $F^*$. Since $X^*$ is a $3$-ball with an Alexandrov metric of nonnegative curvature, by Perelman's Doubling Theorem \cite{Pe}, the double $DX^*$ of $X^*$ is a $3$-sphere with an Alexandrov metric of nonnegative curvature. There are two copies of $K^*$ in $DX^*$, each one with two points whose space of directions is not larger than $\sphere^2(1/2)$. Since $K^*$ is an extremal simple closed curve in $DX^*\simeq \sphere^3$,  the double branched cover $DX^*_2$ with branching locus $K^*$ is a $3$-dimensional Alexandrov space of nonnegative curvature, by Lemma~\ref{L:DBL_BC}. Moreover, the space of directions at points in the branching locus of $DX^*_2$ corresponding to isolated fixed points in $K^*$ remains not larger than $\sphere^2(1/2)$ (cf.~\cite[Section~5]{GW}). Therefore, $DX^*_2$ has six points whose space of directions is  not larger than $\sphere^2(1/2)$, which contradicts Lemma~\ref{L:FP_Bound}. 
 \end{proof}

Let $K^*\subset \overline{E}^*$ be a simple closed curve in $X^*\simeq \sphere^3$. To  conclude the proof, we first  verify that $K^*=E^*\cup F^*$. If $F^*$ consists of two isolated fixed points, then $K^*=E^*\cup F^*$ and there is nothing to prove in this case. If $F^*$ consists of three isolated fixed points, it follows from  \cite[Lemma 5.1]{F1} that $E^*\cup F^*$ cannot be the disjoint union of $K^*$ and an isolated fixed point. Suppose now that $F^*$ consists of four isolated fixed points and at least one of them is not contained in $K^*$. A double branched cover argument as in the proof of Proposition~\ref{P:FPH_NOCIRCLES} yields a contradiction in this case. Hence, $K^*=E^*\cup F^*$ in all cases.

Finally, to verify that $K^*$ is the unknot, let $X^*_2$ be the double branched cover of $X^*$ with branching locus $K^*$. If $K^*$ is knotted, then $\pi_1(X^*_2)$ has order at least three \cite[Theorem~C]{GW}. On the other hand, $X^*_2$ is a $3$-dimensional Alexandrov space of nonnegative curvature with at least two points whose space of directions is not larger than $\sphere^2(1/2)$.  It follows that, if $K^*$ is knotted, the universal cover of $X^*_2$ is a $3$-dimensional Alexandrov space with at least six points with space of directions not larger than $\sphere^2(1/2)$, which contradicts  Lemma~\ref{L:FP_Bound}.

\bibliographystyle{amsplain}

\begin{thebibliography}{10}


\bibitem{Br} Bredon,~G.~E.,  \emph{Introduction to compact transformation groups}, Pure and  Applied Mathematics, vol.~46, Academic Press, New York-London, 1972.

\bibitem{Bro} Brouwer,~L.E.J.,  \emph{\"Uber die periodischen transformationen der Kugel}, Math. Ann. \textbf{80} (1919), 39--41.

\bibitem{BBI}
Burago, D.,  Burago, Y.~and Ivanov, S., \emph{A course in metric geometry}, Graduate Studies in Mathematics, vol.~33, American Mathematical Society, Providence, RI, 2001.

\bibitem{BGP}Burago, Y., Gromov, M. and Perel'man, G., \emph{A. D. Aleksandrov spaces with curvatures bounded below}, (Russian) Uspekhi Mat. Nauk \textbf{47} (1992), no. 2(284), 3--51, 222; translation in Russian Math. Surveys \textbf{47} (1992), no. 2, 1--58. 

\bibitem{CK} Constantin, A.~and Kolev, B., \emph{The theorem of Ker\'ekj\'art\'o on periodic homeomorphisms of the disc and the sphere}, Enseign. Math. (2) \textbf{40} (1994), no. 3-4, 193--204.

\bibitem{Da} Daverman, R.J., \emph{Decompositions of manifolds}, Reprint of the 1986 original. AMS Chelsea Publishing, Providence, RI, 2007.

\bibitem{dK} de Ker\'ekj\'art\'o, B., \emph{Sur les groupes compacts de transformations topologiques des surfaces}, Acta Math. \textbf{74} (1941), 129--173.  

\bibitem{Ed}  Edmonds, A.L., \emph{A survey of group actions on $4$-manifolds}, preprint (2009), \href{http://arxiv.org/abs/0907.0454}{arXiv:0907.0454} [math.DG].

\bibitem{Edw} Edwards, R.D.,  \emph{Suspensions of homology spheres}, preprint (2006),  \href{http://arxiv.org/abs/math/0610573}{arXiv:math/0610573} [math.GT].

\bibitem{Ei} Eilenberg, S., \emph{Sur les transformations periodiques de la surface de la sph\`ere}, Fund. Math. \textbf{22} (1934), 28--44.

\bibitem{F1} Fintushel, R., \emph{Circle actions on simply connected 4-manifolds}, Trans.  Amer. Math. Soc. \textbf{230} (1977), 147--171.

\bibitem{F2} Fintushel, R., \emph{Classification of circle actions on $4$-manifolds}, Trans. Amer. Math. Soc.  \textbf{242} (1978), 377--390.

\bibitem{FY} Fukaya, K.~and Yamaguchi, T., \emph{Isometry groups of singular spaces}, Math. Z.  \textbf{216}  (1994), no. 1, 31--44.

\bibitem{GG} Galaz-Garcia, F., \emph{Nonnegatively curved fixed point homogeneous manifolds in low dimensions},  Geom. Dedicata  \textbf{157} (2012), 367--396. 

\bibitem{GGG} Galaz-Garcia, F.~and Guijarro, L., \emph{Isometry groups of Alexandrov spaces}, Bull. London Math. Soc., \textbf{45} (2013), no. 3, 567--579.

\bibitem{GGK} Galaz-Garcia, F.~and Kerin, M.,  \emph{Cohomogeneity-two torus actions on non-negatively curved manifolds of low dimension}, Math.~Z., to appear. Preprint (2011), \href{http://arxiv.org/abs/1111.1640}{arXiv:1111.1640v1} [math.DG].

\bibitem{GGS} Galaz-Garcia, F.~and Searle, C.,  \emph{Low-dimensional manifolds with non-negative curvature and maximal symmetry rank}, Proc. Amer. Math. Soc., \textbf{139}  (2011), 2559--2564. 

\bibitem{GGS_C1A} Galaz-Garcia, F.~and Searle, C., \emph{Cohomogeneity one Alexandrov spaces}, Transform. Groups \textbf{16} (2011), no. 1, 91--107.

\bibitem{Gr} Grove, K., \emph{Geometry of and via symmetries},  Conformal, Riemannian and Lagrangian geometry (Knoxville, TN, 2000), 31--53, Univ. Lecture Ser., 27, Amer. Math. Soc., Providence, RI, 2002.

\bibitem{GS} Grove, K.~and Searle, C., \emph{Positively curved manifolds with maximal symmetry-rank}, J. Pure Appl. Algebra \textbf{91} (1994), no. 1-3, 137--142.

\bibitem{GS2} Grove, K.~and Searle, C., \emph{Differential topological restrictions curvature and symmetry}, J. Differential Geom. \textbf{47} (1997), no. 3, 530--559. 

\bibitem{GW} Grove, K.~and Wilking, B., \emph{A knot characterization and $1$-connected nonnegatively curved $4$-manifolds with circle symmetry}, preprint (2013),  	\href{http://arxiv.org/abs/1304.4827}{arXiv:1304.4827} [math.DG].

\bibitem{HaSe} Harvey, J.~ and Searle, C., \emph{ Orientation and symmetries of Alexandrov spaces with applications in positive curvature}, preprint (2012),  	\href{http://arxiv.org/abs/1209.1366}{arXiv:1209.1366} [math.DG].

\bibitem{HK} Hsiang, W.Y.~and Kleiner, B., \emph{On the topology of positively curved  {$4$}-manifolds with symmetry}, J. Differential Geom. \textbf{29} (1989),  no.~3, 615--621.

\bibitem{Ka} Kapovitch, V., \emph{Regularity of limits of noncollapsing sequences of manifolds}, Geom. Funct. Anal. \textbf{12} (2002), no. 1, 121--137.

\bibitem{Ke} de Ker\'ekj\'art\'o, B., \emph{ Sur les groupes compacts de transformations topologiques des surfaces}, (French) Acta Math. \textbf{74}, (1941). 129--173.

\bibitem{Kl} Kleiner, B., \emph{Riemannian four-manifolds with nonnegative curvature and continuous symmetry}, Ph.D. thesis, University of California, Berkeley, 1990.

\bibitem{Ko2} Kobayashi, S., \emph{Transformation groups in differential geometry}, Reprint of the 1972 edition. Classics in Mathematics. Springer-Verlag, Berlin, 1995.

\bibitem{MZ} Montgomery, D.~and Zippin, L., \emph{Examples of transformation groups}, Proc. Amer. Math. Soc., \textbf{5} (1954), 460--465.

\bibitem{OR} Orlik, P.~and Raymond, F., \emph{Actions of the torus on $4$-manifolds I}, Trans. Amer. Math. Soc. \textbf{152} 531--559 (1970).

\bibitem{Pa} Pao, P.~S., \emph{Nonlinear circle actions on the {$4$}-sphere and twisting spun knots}, Topology \textbf{17} (1978), no. 3, 291--296.

\bibitem{Pe} Perelman, G., \emph{Alexandrov's spaces with curvatures bonded from below II}, preprint (1991) Available at http://www.math.psu.edu/petrunin/papers/
papers.html.

\bibitem{Ra} Raymond, F., \emph{Classification of the actions of the circle on  {$3$}-manifolds}, Trans. Amer. Math. Soc. \textbf{131} (1968), 51--78.

\bibitem{Ri} Richardson, R.~W., Jr., \emph{Groups acting on the $4$-sphere}, Illinois J. Math. \textbf{5} 1961 474--485.


\bibitem{SY} Searle, C.~and Yang, D.,  \emph{On the topology of non-negatively curved simply connected {$4$}-manifolds with continuous symmetry}, Duke Math. J.  \textbf{74} (1994), no.~2, 547--556.

\bibitem{Wo} W\"orner, A., \emph{A splitting theorem for nonnegatively curved Alexandrov spaces}, Geom. Topol.  \textbf{16} (2012) 2391--2426.

\bibitem{Wu} Wu, J.Y., \emph{Topological regularity theorems for Alexandrov spaces}, J. Math. Soc. Japan \textbf{49} (1997), no. 4, 741--757.
\end{thebibliography}


\end{document}